\documentclass[letterpaper, 10 pt, conference]{ieeeconf}

\IEEEoverridecommandlockouts

\usepackage[utf8]{inputenc}
\usepackage{textcomp}

\usepackage{graphicx}
\usepackage[table]{xcolor}
\usepackage{hhline}
 
\usepackage{amsmath,amsthm,amssymb}
\usepackage{mathtools}
\usepackage[version=4]{mhchem}
\usepackage{siunitx}
\usepackage{longtable,tabularx}
\usepackage{booktabs}
\usepackage{multirow}
\usepackage{dsfont}
\usepackage[linesnumbered,ruled,vlined]{algorithm2e}

\newtheorem{define}{Definition}
\newtheorem{theorem}{Theorem}
\newtheorem{lemma}{Lemma}
\newtheorem{corr}{Corollary}
\newtheorem{remark}{Remark}
\newtheorem{assume}{Assumption}

\newcommand{\R}{\mathbb R}

\newcommand{\Nat}{\mathbb{N}}

\newcommand{\bmx}[1]{\begin{bmatrix}#1\end{bmatrix}} 
\newcommand{\bkt}[1]{\left[#1\right]} 
\newcommand{\pth}[1]{\left(#1\right)} 
\newcommand{\brc}[1]{\left \{#1\right \}} 
\newcommand{\nrm}[1]{\left \lVert#1\right \rVert} 

\newcommand{\ggeq}{\succeq} 

\DeclarePairedDelimiter{\ceil}{\lceil}{\rceil}
\DeclarePairedDelimiter{\floor}{\lfloor}{\rfloor}
\DeclarePairedDelimiter{\abs}{\lvert}{\rvert}

\makeatletter
\let\oldceil\ceil
\def\ceil{\@ifstar{\oldceil}{\oldceil*}}
\makeatother
\makeatletter
\let\oldfloor\floor
\def\floor{\@ifstar{\oldfloor}{\oldfloor*}}
\makeatother
\makeatletter
\let\oldnorm\norm
\def\norm{\@ifstar{\oldnorm}{\oldnorm*}}
\makeatother
\makeatletter
\let\oldabs\abs
\def\abs{\@ifstar{\oldabs}{\oldabs*}}
\makeatother

\newtheorem{problem}{Problem}
\newtheorem{prop}{Proposition}

\title{\LARGE \bf
Adversarial Robustness for Matrix Control Barrier Functions in Sampled-Data Systems
}

\author{James Usevitch%
\thanks{James Usevitch is with the Department of Electrical and Computer Engineering, Brigham Young University, Provo, UT, USA
        {\tt\small james\_usevitch@byu.edu}}%
}

\begin{document}

\maketitle
\thispagestyle{empty}
\pagestyle{empty}

\begin{abstract}

This paper presents novel theoretical results to guarantee multi-agent set invariance using Matrix Control Barrier Functions in sampled-data systems. More specifically, the paper presents conditions under which heterogeneous control-affine agents applying zero-order-hold control inputs can compute control inputs to render safe sets defined by matrix inequalities forward invariant. It then introduces methods to guarantee set invariance while accounting for the presence of adversarial agents seeking to drive the system state to unsafe sets. Finally, the paper presents theoretical extensions of these set invariance results to systems having high relative degree with respect to the matrix-valued safe set function.

\end{abstract}

\section{Introduction}

Set invariance and safety guarantees are critically important to the development and deployment of modern autonomous systems and robotics technologies.
Safety is commonly defined in the literature as forward invariance of a subset of the system's state space, and a fundamental problem in robotics and control is to derive feedback control laws that guarantee this property.
Control Barrier Functions (CBFs) \cite{ames2019control, garg2024advances} are a well-established class of techniques to derive both theoretical conditions and algorithmic control laws to ensure forward invariance of safe subsets of state space. CBFs have been successfully applied to a broad variety of domains including legged robots \cite{agrawal2017discrete}, aircraft \cite{so2024train}, optimization algorithms \cite{molnar2025collision}, satellites \cite{breeden2023robust}, self-driving vehicles \cite{chen2017obstacle}, and more.

Prior work on CBFs have typically defined safe sets in terms of superlevel sets of scalar-valued functions. Recent work has generalized CBF theory to matrix-valued functions through the introduction of Matrix CBFs (MCBFs) for both continuous-time \cite{ong2025matrix} and discrete-time \cite{usevitch2025computing} systems. These MCBFs define safe sets in terms of subsets of state space in which symmetric matrix-valued functions yield positive definite outputs. This generalization has significantly broadened the types of safety constraints that can be expressed in CBF form including direct constraints on matrix eigenvalues, encoding spectrahedra-based safe sets, and enforcing combinatorial \(p\)-choose-\(r\) sets of constraints \cite{ong2025combinatorial}.

Despite these advancements, there remain several limitations to prior work on MCBFs. First, no prior literature exists that considers MCBF methods for sampled-data systems. Such systems involve continuous-dynamics and a piecewise-constant control law, and are frequently more realistic models when describing physical plants driven by computer-based control algorithms.
The application of CBF and set invariance methods to nonlinear sampled-data systems is nontrivial since safety guarantees must hold for the entire sampling interval on which the control input is constant and not just for the time instants at which control is computed. Prior literature has presented methods to apply scalar-valued CBFs to sampled-data systems \cite{breeden2021control, usevitch2022adversarial}, but none have yet considered the more general class of matrix-valued CBFs.
Second, the limited prior work that exists on multi-agent MCBFs assumes that all agents are behaving cooperatively; i.e., all agents are collaboratively applying nominal control laws to ensure satisfaction of sufficient conditions for set invariance.
Modern multi-agent systems are increasingly exposed to faulty and adversarial misbehavior, either originating from their environment or from other agents. Prior literature has analyzed robustness to adversarial misbehavior for scalar-valued CBFs \cite{usevitch2022adversarial, usevitch2021adversarial}, but no work has generalized this to matrix-valued CBFs.
Finally, prior work on MCBFs has only considered systems with relative degree 1 with respect to the safe set function. Higher relative degrees have been considered for scalar-valued CBFs \cite{tan2021high, xiao2021high}, but this direction has not yet been generalized to the matrix-valued case.

To address these gaps, this paper presents novel methods to provide set invariance for heterogeneous, multi-agent, sampled-data dynamical systems in the presence of adversarial misbehavior using Matrix Control Barrier Function methods. To the best of our knowledge, this paper presents the first set invariance results using MCBFs for sampled-data systems, the first results guaranteeing adversarial robustness for MCBFs, and the first set invariance analysis for systems having high relative degree with respect to the matrix-valued safe set function.

This paper's specific contributions are as follows:
\begin{itemize}
    \item We present a novel set invariance result for matrix control barrier functions under sampled-data dynamics in cooperative multi-agent settings.
    \item We present a novel adversarially robust set invariance result for matrix control barrier functions under sampled-data dynamics.
    \item We present a novel set invariance result for systems with high relative degree using matrix control barrier functions. This result is valid for either cooperative or adversarial settings.
\end{itemize}

\section{Notation and Problem Formulation}
\label{sec:notation}

The notation \(H \in \mathcal{C}_{loc}^{1,1}\) denotes a continuously differentiable function \(H\) whose Jacobian \(\frac{\partial H}{\partial x}\) is locally Lipschitz continuous on its domain.
The set of symmetric, real-valued matrices of size \(p \times p\) is denoted \(\mathbb{S}^p\). The cone of positive definite symmetric \(p \times p\) matrices is denoted \(\mathbb{S}^p_++\) and the cone of positive definite symmetric matrices is denoted \(\mathbb{S}^p_+\).
Given matrices \(A,B \in \R^{n\times n}\), the notation \(A \ggeq 0\) and \(B \succ 0\) indicate that \(A\) is positive semidefinite and \(B\) is positive definite as per the Loewner order.
The notation \(\nrm{\cdot}\) indicates the Euclidean norm when applied to vectors in \(\R^n\) and the spectral norm when applied to matrices in \(\R^{m \times n}\). Given a vector \(v \in \mathcal{V}_1\) and a linear operator \(\mathcal{A} : \mathcal{V}_1 \to \mathcal{V}_2\), where \(\mathcal{V}_1\) and \(\mathcal{V}_2\) are normed vector spaces with respective norms \(\nrm{\cdot}_\alpha\) and \(\nrm{\cdot}_\beta\), the induced norm \(\nrm{\cdot}_{\alpha,\beta}\) is defined as (\cite{moon2000mathematical})
\begin{align*}
    \nrm{\mathcal{A}}_{\alpha,\beta} \triangleq \sup_{v \neq 0} \frac{\nrm{\mathcal{A} v}_\beta}{\nrm{v}_\alpha}.
\end{align*}
It follows that \(\nrm{\mathcal{A}v}_\beta \leq \nrm{\mathcal{A}}_{\alpha,\beta} \nrm{v}_\alpha\). When the specific values are omitted, it is assumed that \(\alpha = \beta = 2\).

\subsection{Problem Formulation}

Consider a system of \(N\) agents with indices \(\{1,\ldots,N\}\). The \(i\)th agent has state \(x^i \in \R^{n^i}\) and control input \(u^i \in \R^{m^i}\) with control affine dynamics
\begin{align}
    \dot{x}^i &= f^i(x^i) + g^i(x^i) u^i.
\end{align}
Each \(f^i : \R^{n^i} \to \R^{n^i}\) and \(g^i : \R^{n^i} \to \R^{n^i \times m^i}\) may differ among agents, but all are assumed to be locally Lipschitz in their arguments on their respective domains.
Agents' control inputs \(u^i\) are updated in a sampled-data manner using a zero-order hold (ZOH) method. More specifically, each agent applies a constant control input \(u^i(t) = u^i_{k}\) \(\forall t \in [t_k, t_{k+1})\) where the sampling times are defined as \(t_k \triangleq t_0 + k \Delta_t\), \(k \in \mathbb{N}\), \(\Delta_t > 0\). The set of sampling times is denoted
\(\mathcal{T} = \{t_k\}_{k \in \mathbb{N}}\).
It is assumed that agents receive knowledge of the system state \(\vec{x}\) at sampling times \(t_k\) and may apply feedback control laws \(u^i_{k} = \kappa^i(\vec{x}(t_k))\).

For brevity, we define \(n \triangleq \sum_{i=1}^N n^i\) and \(m = \sum_{i=1}^N m^i\).
The vector of combined states for all agents is denoted \(\vec{x} \triangleq \bmx{(x^1)^\intercal & \cdots & (x^N)^\intercal}^\intercal \in \R^{n}\), the vector of combined control inputs is denoted \(\vec{u} \triangleq \bmx{(u^1)^\intercal & \cdots & (u^N)^\intercal}^\intercal \in \R^{m}\).
For brevity, we will denote \(\vec{x}_k \triangleq \vec{x}(t_k)\) and \(\vec{u}_k \triangleq \vec{u}(t_k)\).
The dynamics of the combined system is denoted
\begin{align}
    \dot{\vec{x}} = F(\vec{x}) + G(\vec{x})\vec{u}_k = \bmx{f^1(x^1) + g^1(x^1)u^1_k \\ \vdots \\ f^N(x^N) + g^N(x^N)u^N_k}  \label{eq:multi_agent_dynamics}
\end{align}
where \(F(\vec{x}) \triangleq \bmx{(f^1(x^1))^\intercal & \cdots (f^N(x^N))^\intercal}^\intercal\) and \(G(\vec{x}) \triangleq \text{blkdiag}\pth{g^1(x^1), \cdots, g^N(x^N)}\).
Since each control input is a piecewise constant function of time, Carath\'eodory's theorem guarantees existence and uniqueness of solutions to the combined system dynamics \eqref{eq:multi_agent_dynamics} \cite[Sec. 2.2]{grune2016nonlinear}, \cite[Thm. 54]{sontag2013mathematical}.

Each agent's control input \(u^i\) is constrained to lie within a nonempty convex, compact subset \(\mathcal{U}^i \subset \R^{m^i}\).
The Cartesian product of these subsets is denoted \(\mathcal{U} = \bigtimes_{i=1}^N \mathcal{U}^i\), with \(\vec{u} \in \mathcal{U}\).
Observe that \(\mathcal{U}\) is compact since each \(\mathcal{U}^i\) is compact, which implies that the norm \(\nrm{\vec{u}}\) has the upper bound
\begin{align}
    b_u \triangleq \sup_{\vec{u} \in \mathcal{U}} \nrm{\vec{u}}. \label{eq:b_u}
\end{align}
For purposes of computational feasibility, each subset \(\mathcal{U}^i\) is assumed to be the intersection of a finite number of convex cones. This will ensure that the constraint \(u^i \in \mathcal{U}^i\) can be handled by state-of-the-art convex optimization solvers \cite{Clarabel_2024, mosek}.

The state space for the combined system is partitioned into a safe set \(\mathcal{S} \subset \R^{n}\) and an unsafe set \(\overline{\mathcal{S}} = \R^{n} \backslash \mathcal{S}\). The safe set \(\mathcal{S}\) is defined in terms of the superlevel sets of a matrix-valued function \(H : \R^{n} \to \mathbb{S}^p\) as follows:
\begin{align}
    S &= \brc{\vec{x} \in \R^{n} : H(\vec{x}) \ggeq 0}. \label{eq:safe_set}
\end{align}
We assume \(H \in \mathcal{C}_{loc}^{1,1}\); i.e., \(H\) is continuously differentiable and its (tensor-valued) Jacobian \(\frac{\partial H}{\partial x}\) is locally Lipschitz continuous on its domain.

We will be mainly interested in ensuring that the set \(\mathcal{S}\) is rendered forward invariant, which is defined as follows.
\begin{define}
    A set \(\mathcal{C} \subset \R^{n}\) is forward invariant with respect to the dynamics \eqref{eq:multi_agent_dynamics} if \(\vec{x}(t) \in \mathcal{C}\) for all \(t \geq t_0\).
    The set \(\mathcal{C}\) is called \emph{safe} if it is both forward invariant and \(\mathcal{C} \subseteq \mathcal{S}\).
\end{define}

The central problem of this paper is as follows:
\begin{problem}
    Given the multi-agent sampled-data dynamics \eqref{eq:multi_agent_dynamics}, derive a method to compute control inputs \(u^i_k\) rendering sets of the form \eqref{eq:safe_set} forward invariant.
\end{problem}
This problem will be solved for three settings: 1) under the assumption of cooperative behavior among all agents, 2) in the presence of adversarial misbehavior from a subset of agents, and 3) for systems with high relative degree. The precise definition of adversarial behavior will be given in Section \ref{sec:adversarial}.

\subsection{Preliminaries and Overview of Matrix Control Barrier Functions}

In preparation for the main results we briefly review several relevant preliminary concepts.
Similar to \cite{ong2025matrix}, the entry-wise Lie derivative matrix \(\mathcal{L}_F H : \R^n \to \mathbb{S}^p\) along a vector field \(F\) is defined as
\begin{align}
    [\mathcal{L}_F H]_{ij}(\vec{x}) = \mathcal{L}_F (H_{ij}) (\vec{x}),\ \forall i, j \in \{1,\ldots,p\}.
\end{align}
Next, let \(G_j\) be the \(j\)th column of \(G\) and let \([\vec{u}]_j\) be the \(j\)th entry of \(\vec{u}\). The entrywise Lie derivative \(\mathcal{L}_{G_j} H(\vec{x}) : \R^m \to \mathbb{S}^p\)
operates on the control input \(\vec{u}\) as follows:
\begin{align*}
    (\mathcal{L}_G H(\vec{x}))(\vec{u}) \triangleq \sum_{j=1}^m \mathcal{L}_{G_j} H(x) [\vec{u}]_j \in \mathbb{S}^p.
\end{align*}

It follows that \(\dot{H}(\vec{x}) = \mathcal{L}_{\widehat{F}} H(\vec{x})\) for a (possibly closed-loop) autonomous system \(\dot{\vec{x}} = \widehat{F}(\vec{x})\).
Likewise, \(\dot{H}(\vec{x}, \vec{u}) = \mathcal{L}_F H(\vec{x}) + (\mathcal{L}_G H(\vec{x})) (\vec{u})\) for the control affine dynamics in \eqref{eq:multi_agent_dynamics}.

The following result from \cite{ong2025matrix} establishes a sufficient condition for forward invariance of the set \(S\) for continuous-time autonomous systems.
\begin{prop}[{Adapted from \cite{ong2025matrix}}]
    \label{prop:mcbf}
    Consider the autonomous system with control affine dynamics
    \(\dot{\vec{x}} = \widehat{F}(\vec{x})\)
    with a continuous vector field \(\widehat{F}\)
    Let \(S\) be defined as in \eqref{eq:safe_set}. If the following barrier condition holds with a positive constant \(c_\alpha > 0\),
    \begin{align}
        \dot{H}(\vec{x}) \ggeq -c_\alpha H(\vec{x})\label{eq:orig_mcbf_safety_cond}
    \end{align}
    for all \(\vec{x}\) in an open neighborhood \(\mathcal{E} \supset S\), then set \(S\) is forward invariant for the system.
\end{prop}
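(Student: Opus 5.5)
Fix an arbitrary solution \(\vec{x}(t)\) of \(\dot{\vec{x}}=\widehat{F}(\vec{x})\) with \(\vec{x}(t_0)\in S\), and argue by contradiction that it never leaves \(S\). The plan is to reduce the matrix inequality to a family of scalar ones via quadratic forms, and then run a comparison (Grönwall) argument on a short interval where the trajectory stays inside the neighborhood \(\mathcal{E}\).

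\textbf{Scalarization.} For each unit vector \(v\in\R^p\) define \(h_v(\vec{x}) \triangleq v^\intercal H(\vec{x}) v\). Then \(H(\vec{x})\ggeq 0\) holds if and only if \(h_v(\vec{x})\geq 0\) for all \(\nrm{v}=1\). Because \(H\in\mathcal{C}^{1,1}_{loc}\), each \(h_v\) is \(C^1\) with \(\dot h_v = v^\intercal \dot H v\). Applying the quadratic form to \eqref{eq:orig_mcbf_safety_cond} gives the scalar inequality
\begin{align*}
    \dot h_v(\vec{x}) \geq -c_\alpha h_v(\vec{x}) \quad \text{for all } \vec{x}\in\mathcal{E} \text{ and all unit } v.
\end{align*}

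\textbf{Contradiction setup.} Suppose the trajectory leaves \(S\). Let \(\tau \triangleq \inf\{t\geq t_0 : \vec{x}(t)\notin S\}\). Since \(S\) is closed (\(H\) is continuous and the PSD cone is closed), we have \(\vec{x}(\tau)\in S\subset\mathcal{E}\). Because \(\mathcal{E}\) is open and the trajectory is continuous, there is \(\epsilon>0\) with \(\vec{x}(t)\in\mathcal{E}\) for all \(t\in[\tau,\tau+\epsilon]\).

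\textbf{Comparison step.} Fix any unit \(v\). On \([\tau,\tau+\epsilon]\), the scalar function \(\phi(t)=h_v(\vec{x}(t))\) is \(C^1\) and satisfies \(\dot\phi\geq -c_\alpha\phi\). Hence \(\frac{d}{dt}\big(e^{c_\alpha (t-\tau)}\phi(t)\big)\geq 0\), which gives
\begin{align*}
    \phi(t)\geq e^{-c_\alpha(t-\tau)}\phi(\tau)\geq 0 .
\end{align*}
This holds for every unit \(v\), so \(H(\vec{x}(t))\ggeq 0\) on \([\tau,\tau+\epsilon]\). That contradicts the definition of \(\tau\) as an infimum, so \(\vec{x}(t)\in S\) for all \(t\geq t_0\).

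This argument only covers the maximal interval of existence of the solution. Continuity of \(\widehat{F}\) gives existence by Peano, and the paper's setting supplies uniqueness. If uniqueness were lacking, the argument still applies to each solution separately.

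\textbf{Main obstacle.} The delicate step is the localization. The barrier inequality is only assumed on \(\mathcal{E}\), so it must be invoked only while the trajectory is near \(S\). One could instead try to differentiate \(\lambda_{\min}(H(\vec{x}(t)))\) directly, but it is nonsmooth. The fixed-\(v\) quadratic forms avoid this, since each \(h_v\) is \(C^1\). The only subtlety is that \(v\) is fixed before the comparison argument, so no uniformity in \(v\) is needed. One also has to check that the closedness of \(S\) makes \(\tau\) well defined with \(\vec{x}(\tau)\in S\).
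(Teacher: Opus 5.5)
Your proof is correct. The paper gives no proof of this proposition; it imports the result from \cite{ong2025matrix}. So your argument is a self-contained replacement rather than a variant of an in-paper proof.

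Your fixed-\(v\) scalarization is the quadratic-form reading of a matrix integrating factor. While \(\vec{x}(t)\in\mathcal{E}\), one has \(\frac{d}{dt}\big(e^{c_\alpha(t-\tau)}H(\vec{x}(t))\big)=e^{c_\alpha(t-\tau)}\big(\dot H+c_\alpha H\big)\succeq 0\). Integrating gives \(H(\vec{x}(t))\succeq e^{-c_\alpha(t-\tau)}H(\vec{x}(\tau))\succeq 0\), and testing against fixed unit \(v\) is exactly how that integration step is checked. This works cleanly because the right-hand side \(-c_\alpha H\) is linear and commutes with \(v^\intercal(\cdot)v\). For a nonlinear matrix comparison function the per-\(v\) reduction is not automatic. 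One would then be pushed toward the nonsmooth map \(\lambda_1(H(\vec{x}))\) or tangent-cone arguments, which is where citing a general theory pays off. For the exponential condition actually stated here, your elementary argument suffices.

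In return, your route is elementary, needs no uniqueness of solutions, and is purely trajectory-wise: it uses the barrier inequality only along the solution while it stays near \(S\). That is exactly the form the paper needs. In the proofs of Theorems~\ref{thm:ZE-MCBF}--\ref{thm:HOARZE-MCBF}, the inequality \(\Psi(\vec{x}(t),\vec{u}_k)\succeq 0\), and in Theorem~\ref{thm:HOARZE-MCBF} each \(\Psi_q\succeq 0\) in the inductive chain, is established only along the sampled trajectory on \([t_k,t_{k+1})\). It is not established on an open set \(\mathcal{E}\supset S\) for the frozen-input vector field. So Proposition~\ref{prop:mcbf} as stated does not literally cover those invocations. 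Your comparison step with \(\tau=t_k\) gives \(H(\vec{x}(t))\succeq e^{-c_\alpha(t-t_k)}H(\vec{x}_k)\succeq 0\) directly and closes that gap.
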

This result motivated the definition of \emph{Exponential MCBFs}, which can be found in \cite{ong2025matrix}. However, Exponential MCBFs assume that the system applies a continuous (i.e., non-ZOH) control input \(\vec{u}(\vec{x}(t))\).
When applying a closed-loop, ZOH control input \(\vec{u}_k = \vec{\kappa}(\vec{x}(t_k))\) over the time interval \(t \in [t_k, t_{k+1})\), care must be taken so that the inequality in \eqref{eq:orig_mcbf_safety_cond} is not violated as the state \(\vec{x}(t)\) evolves but the control input \(\vec{u}_k\) remains piecewise constant.

Finally, the following theorem known as \emph{Weyl's Inequality} will be used to obtain lower bounds on the eigenvalues of matrices.
\begin{prop}[{Weyl's Inequality \cite[Thm 4.3.1]{horn2012matrix}}]
    \label{thm:weyl}
    Let \(A, B \in \mathbb{S}^p\). Let the respective eigenvalues of \(A,B\) and \(A+B\) be \(\{\lambda_i(A)\}_{i=1}^p\), \(\{\lambda_i(B)\}_{i=1}^p\), and \(\{\lambda_i(A+B)\}_{i=1}^p\). Let the eigenvalues be ordered such that \(\lambda_1 \leq \lambda_2 \leq \cdots \leq \lambda_p\). Then the following holds for all $j \in \{1, \ldots, i\}$:
    \begin{align}
        \lambda_{i-j+1}(A) + \lambda_{j}(B) \leq \lambda_i(A+B).
    \end{align}
\end{prop}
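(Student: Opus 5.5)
The plan is to give a self-contained proof using Rayleigh quotients and a dimension-counting (subspace intersection) argument. This is the standard route behind the Courant--Fischer min--max theorem, so nothing beyond the spectral theorem for real symmetric matrices is needed.

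Since \(A\), \(B\), and \(A+B\) are symmetric, fix orthonormal eigenbases for each. Let \(\{a_k\}\), \(\{b_k\}\), and \(\{c_k\}\) be orthonormal eigenvectors of \(A\), \(B\), and \(A+B\), associated with the eigenvalues \(\lambda_k(A)\), \(\lambda_k(B)\), and \(\lambda_k(A+B)\), ordered as in the statement. For fixed \(i\) and \(j \in \{1,\ldots,i\}\), define three subspaces:
\begin{align*}
    \mathcal{U} &= \mathrm{span}\{a_{i-j+1},\ldots,a_p\}, \\
    \mathcal{V} &= \mathrm{span}\{b_j,\ldots,b_p\}, \\
    \mathcal{W} &= \mathrm{span}\{c_1,\ldots,c_i\}.
\end{align*}
Their dimensions are \(p-i+j\), \(p-j+1\), and \(i\). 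These are well defined because \(1 \le i-j+1 \le p\) and \(1 \le j \le p\) when \(1\le j\le i\le p\).

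The key step is to show that \(\mathcal{U}\cap\mathcal{V}\cap\mathcal{W}\neq\{0\}\). I would use the elementary bound \(\dim(\mathcal{X}\cap\mathcal{Y}) \ge \dim\mathcal{X}+\dim\mathcal{Y}-p\) for subspaces of \(\R^p\), which follows from \(\dim(\mathcal{X}+\mathcal{Y})\le p\). Applying it twice gives
\begin{align*}
    \dim(\mathcal{U}\cap\mathcal{V}\cap\mathcal{W}) \ge (p-i+j)+(p-j+1)+i-2p = 1 .
\end{align*}
This counting step is the only place where the index arithmetic matters. It is the main point to get right: the ascending ordering in the statement means the "top" eigenvectors are the ones spanning \(\mathcal{U}\) and \(\mathcal{V}\), while the "bottom" eigenvectors span \(\mathcal{W}\).

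Now pick a unit vector \(x\) in the intersection and compare Rayleigh quotients. Expanding \(x\) in each eigenbasis gives three facts:
\begin{align*}
    x^\intercal A x &\ge \lambda_{i-j+1}(A) && \text{since } x\in\mathcal{U}, \\
    x^\intercal B x &\ge \lambda_j(B) && \text{since } x\in\mathcal{V}, \\
    x^\intercal (A+B) x &\le \lambda_i(A+B) && \text{since } x\in\mathcal{W}.
\end{align*}
Chaining these yields
\begin{align*}
    \lambda_{i-j+1}(A)+\lambda_j(B) \le x^\intercal A x + x^\intercal B x = x^\intercal (A+B)x \le \lambda_i(A+B),
\end{align*}
which is the claimed inequality. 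Alternatively, one can simply invoke the cited result \cite[Thm 4.3.1]{horn2012matrix}, whose proof follows exactly these lines.
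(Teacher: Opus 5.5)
Your proof is correct: the dimension count \((p-i+j)+(p-j+1)+i-2p=1\) gives a unit vector in \(\mathcal{U}\cap\mathcal{V}\cap\mathcal{W}\), and the three Rayleigh-quotient bounds chain exactly as you state under the ascending ordering. The paper gives no proof of its own and just cites \cite[Thm 4.3.1]{horn2012matrix}, whose argument is this same subspace-intersection plus Rayleigh-quotient route, so your proposal is essentially that standard proof written out.
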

Given two symmetric matrices \(A, B \in \mathbb{S}^p\), the following corollary establishes lower bounds on their eigenvalues in terms of the spectral norm of their difference.
\begin{corr}
    \label{eq:Weyl_corollary}
    Let \(A, B \in \mathbb{S}^p\). Let the eigenvalues of each matrix be ordered as in Proposition \ref{thm:weyl}. Then the following holds:
    \begin{align}
        \lambda_1(A) \geq \lambda_1(B) - \nrm{A - B}_2.
    \end{align}
\end{corr}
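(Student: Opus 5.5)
The plan is to apply Weyl's Inequality (Proposition \ref{thm:weyl}) to the decomposition \(A = B + (A - B)\), taking the smallest index. Since \(A\) and \(B\) are symmetric, so is \(A - B \in \mathbb{S}^p\), and the proposition applies to the pair \(B\) and \(A-B\).

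Setting \(i = j = 1\) in Proposition \ref{thm:weyl}, with its first matrix taken to be \(B\) and its second to be \(A-B\), gives
\begin{align*}
    \lambda_1(B) + \lambda_1(A - B) \leq \lambda_1\big(B + (A-B)\big) = \lambda_1(A).
\end{align*}
This is the only admissible choice, since \(j \in \{1,\ldots,i\}\) forces \(j = 1\) when \(i = 1\).

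It then remains to lower bound \(\lambda_1(A-B)\) by \(-\nrm{A-B}_2\). For a symmetric matrix \(M\), the spectral norm equals the largest absolute eigenvalue, \(\nrm{M}_2 = \max_k \abs{\lambda_k(M)}\). One can see this by orthogonally diagonalizing \(M = Q \Lambda Q^\intercal\) and noting that the induced 2-norm is invariant under orthogonal transformations. Hence \(\abs{\lambda_1(M)} \leq \nrm{M}_2\), so \(\lambda_1(M) \geq -\nrm{M}_2\). Applying this with \(M = A - B\) and substituting into the display yields \(\lambda_1(A) \geq \lambda_1(B) - \nrm{A-B}_2\), which is the claim.

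There is no real obstacle here. The only point needing care is matching the roles of the matrices and the index convention in Proposition \ref{thm:weyl}, where eigenvalues are in ascending order so \(\lambda_1\) is the minimum, and then justifying the identity between the spectral norm and the maximal absolute eigenvalue for symmetric matrices.
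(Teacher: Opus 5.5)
Your proposal is correct and follows essentially the same route as the paper: apply Proposition \ref{thm:weyl} with \(i=j=1\) to \(B + (A-B)\), then bound \(\lambda_1(A-B) \geq -\nrm{A-B}_2\). Your justification via \(\nrm{M}_2 = \max_k \abs{\lambda_k(M)}\) is actually more accurate than the paper's, which claims \(\nrm{A}_2 = \abs{\lambda_p(A)}\) and \(\abs{\lambda_1(A)} \leq \abs{\lambda_p(A)}\); neither holds in general, e.g.\ for \(A = \mathrm{diag}(-5,1)\).
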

\begin{proof}
    Recall that for symmetric matrices, \(\nrm{A}_2 = \sqrt{\lambda_p(A^2)} = |\lambda_p(A)|\). By the ordering of eigenvalues, \(|\lambda_1(A)| \leq |\lambda_p(A)|\), which implies \(-|\lambda_p(A)| \leq \lambda_1(A) \leq |\lambda_p(A)|\).
    Let \(E = A - B\). By Weyl's inequality with \(i = j = 1\), we have
    \begin{align*}
        \lambda_1(B + E) \geq \lambda_1(B) + \lambda_1(E) \geq \lambda_1(B) - \nrm{E}_2.
    \end{align*}
    The result follows by substituting in the definition of \(E\).
\end{proof}

\section{Main Results}
\label{sec:main}

We now present our main results. Subsection \ref{sec:cooperative} gives a set invariance result for sampled-data systems in cooperative multi-agent settings. Next, subsection \ref{sec:adversarial} set invariance conditions in the presence of adversaries are derived. Finally, subsection \ref{sec:high_order} presents set invariance conditions for systems with high relative degree with respect to the safe set function.

\subsection{Zero-Order Hold Exponential Matrix Control Barrier Functions}
\label{sec:cooperative}

Let \(D \subseteq \R^n\) be a domain such that \(S \subset D\).
The following standing assumptions are made:

\begin{assume}
    \label{assume:Lipschitz_D}
    The function \(H\) and Lie derivatives \(\mathcal{L}_F H(x)\) and \(\mathcal{L}_G H(x)\) are Lipschitz on \(D\) with respective constants \(L_H, L_{FH}, L_{GH} \geq 0\).\footnote{As a special case, this holds when \(H\), \(\mathcal{L}_F H(x)\) and \(\mathcal{L}_G H(x)\) are locally Lipschitz and \(D\) is compact.}
\end{assume}

\begin{assume}
    \label{assume:norm_bounds}
    There exist finite upper bounds \(b_F, b_G > 0\) such that \(\sup_{\vec{x}\in D} \nrm{F(\vec{x})}_2 \leq b_F\) and \(\sup_{\vec{x} \in D} \nrm{G(\vec{x})}_2 \leq b_G\).\footnote{This likewise holds as a special case when \(D\) is compact.}
\end{assume}

\subsection{Multi-Agent Exponential Matrix Control Barrier Functions}

To guarantee forward invariance of \(S\) under the dynamics \eqref{eq:multi_agent_dynamics}, we must ensure that the piecewise constant control input \(\vec{u}_k = \vec{\kappa}(\vec{x}(t_k))\) is chosen such that \(H(x(t)) \ggeq 0\) for all times \(t \in [t_k, t_{k+1})\) on each sampling interval.
Towards this end, we define the function
\begin{align}
    \Psi(\vec{x}, \vec{u}) &\triangleq \dot{H}(\vec{x}, \vec{u}) + c_\alpha H(\vec{x}), \nonumber \\
    &= \mathcal{L}_F H(\vec{x}) + \mathcal{L}_G H(\vec{x}) \vec{u} + c_\alpha H(\vec{x}), \label{eq:sub_dynamics}
\end{align}
where \eqref{eq:sub_dynamics} follows from evaluating \(\dot{H}\) under the system dynamics \eqref{eq:multi_agent_dynamics}.
Observe by Proposition \ref{prop:mcbf} that the existence of a constant \(\vec{u}_k\) such that \(\Psi(\vec{x}(t), \vec{u}_k) \ggeq 0\) \(\forall t \in [t_k, t_{k+1})\) is a sufficient condition for ensuring \(H(x(t)) \ggeq 0\) and therefore \(\vec{x}(t) \in S\) \(\forall t \in [t_k, t_{k+1})\). We accordingly seek to analyze and bound the worst-case normed error between the value of \(\Psi(\vec{x}_k, \vec{u}_k)\) at the sampled time when the control input is computed, and the value of \(\Psi(\vec{x}(t), \vec{u}_k)\) as the state evolves over the sampling interval:
\begin{align*}
    E(t_k, \vec{x}_k, \vec{u}_k) &\triangleq  \sup_{t \in [t_k, t_{k+1})} \nrm{\Psi(\vec{x}(t), \vec{u}_k) - \Psi(\vec{x}_k, \vec{u}_k)}_2.
\end{align*}
Computing \(E\) directly is intractable in general, but its value can be bounded as follows.
Let \(D \subset \R^n\) be the domain from Assumptions \ref{assume:Lipschitz_D} and \ref{assume:norm_bounds}.
From \eqref{eq:multi_agent_dynamics} we can derive the following bound on the norm of the state evolution over any interval \(t \in [t_k, t_{k+1})\):
\begin{align}
    &\nrm{\vec{x}(t) - \vec{x}_k}_2 = \nrm{\int_{t_k}^{t} \pth{F(\vec{x}(\tau)) + G(\vec{x}(\tau)) \vec{u}_k} d\tau}_2, \nonumber \\
    & \leq \int_{t_k}^{t_{k+1}}
    \sup_{\substack{\vec{x} \in D \\ \vec{u} \in \mathcal{U}}}
    \nrm{F(\vec{x}) + G(\vec{x}) \vec{u}_k}_2 d\tau, \nonumber\\
    &\leq \Delta_t \bkt{\pth{\sup_{\vec{x} \in D} \nrm{F(\vec{x})}_2} + \pth{\sup_{\vec{x} \in D} \nrm{G(\vec{x})}_2} \pth{\sup_{\vec{u} \in \mathcal{U}} \nrm{\vec{u}}_2}},
    \label{eq:state_norm_bound}
\end{align}
where we have used the fact that \(t_{k+1} - t_k = \Delta_t\).
Observe that this upper bound is independent of \(t\) and therefore  holds for all \(t \in [t_k, t_{k+1})\).
Using \eqref{eq:b_u} and Assumption \ref{assume:norm_bounds}, equation \eqref{eq:state_norm_bound} can therefore be rewritten as
\begin{align}
    \nrm{\vec{x}(t) - \vec{x}_k}_2 \leq \Delta_t(b_F + b_G b_u)\ \forall t \in [t_k, t_{k+1}), \label{eq:bound_on_x}
\end{align}
where \(\Delta_t\) is the sampling interval.

\begin{remark}
    It may be possible to tighten the bound in \eqref{eq:bound_on_x} by considering the quantity \(\sup_{\substack{\vec{x} \in D \\ \vec{u} \in \mathcal{U}}}
    \nrm{F(\vec{x}) + G(\vec{x}) \vec{u}_k}_2\) instead of the sum of the norms of its individual components. However, in practice it may be more straightforward to obtain bounds on the norms of the components rather than a bound on the norm of the combined expression.
\end{remark}

Using this result, the following Lemma demonstrates an upper bound on the value of \(E(t_k, \vec{x}_k, \vec{u}_k)\).

\begin{lemma}
    \label{lemma:E_bound}
    The following holds for all \(\vec{x}_k \in D\) and \(\vec{u}_k \in \mathcal{U}\):
    \begin{align*}
        E(t_k, \vec{x}_k, \vec{u}_k) \leq \Delta_t (L_{FH} + L_{GH}b_u + c_\alpha L_H) (b_F + b_G b_u),
    \end{align*}
    where \(b_u\) is defined in \eqref{eq:b_u}, \(L_{FH}, L_{GH}, L_{H}\) are defined in Assumption \ref{assume:Lipschitz_D}, and \(b_F, b_G\) are defined in Assumption \ref{assume:norm_bounds}.
\end{lemma}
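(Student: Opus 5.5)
The plan is to expand the difference \(\Psi(\vec{x}(t), \vec{u}_k) - \Psi(\vec{x}_k, \vec{u}_k)\) term by term using \eqref{eq:sub_dynamics}, bound each piece by its Lipschitz constant from Assumption \ref{assume:Lipschitz_D} times \(\nrm{\vec{x}(t) - \vec{x}_k}_2\), and then invoke \eqref{eq:bound_on_x}. Fix \(t \in [t_k, t_{k+1})\). The difference equals
\begin{align*}
    \bkt{\mathcal{L}_F H(\vec{x}(t)) - \mathcal{L}_F H(\vec{x}_k)} + \bkt{\mathcal{L}_G H(\vec{x}(t)) - \mathcal{L}_G H(\vec{x}_k)}\vec{u}_k + c_\alpha \bkt{H(\vec{x}(t)) - H(\vec{x}_k)}.
\end{align*}
By the triangle inequality for the spectral norm, its norm is at most the sum of the norms of the three brackets.

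The first and third brackets are bounded directly by \(L_{FH}\nrm{\vec{x}(t)-\vec{x}_k}_2\) and \(c_\alpha L_H \nrm{\vec{x}(t)-\vec{x}_k}_2\), using \(c_\alpha > 0\). For the middle term, view \(\mathcal{L}_G H(\vec{x})\) as a linear operator \(\R^m \to \mathbb{S}^p\). Interpret Assumption \ref{assume:Lipschitz_D} in the induced norm \(\nrm{\cdot}_{2,2}\) from Section \ref{sec:notation}, with spectral norm on the codomain. Then
\begin{align*}
    \nrm{(\mathcal{L}_G H(\vec{x}(t)) - \mathcal{L}_G H(\vec{x}_k))\vec{u}_k}_2 \leq L_{GH}\nrm{\vec{x}(t)-\vec{x}_k}_2 \nrm{\vec{u}_k}_2 \leq L_{GH} b_u \nrm{\vec{x}(t)-\vec{x}_k}_2 ,
\end{align*}
where the last step uses \(\vec{u}_k \in \mathcal{U}\) and \eqref{eq:b_u}. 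Summing gives \((L_{FH} + L_{GH} b_u + c_\alpha L_H)\nrm{\vec{x}(t)-\vec{x}_k}_2\). Substituting \eqref{eq:bound_on_x} gives a bound independent of \(t\), so taking the supremum over \(t\) yields the claim.

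The main obstacle is that the Lipschitz constants only apply on \(D\), and \eqref{eq:bound_on_x} also used the bounds \(b_F, b_G\), which hold only on \(D\). So I must justify that \(\vec{x}(t) \in D\) for all \(t \in [t_k, t_{k+1})\), not just \(\vec{x}_k \in D\). I would first check whether the paper implicitly takes this as part of the setting, as the derivation of \eqref{eq:bound_on_x} already does by taking suprema over \(D\). If not, I would handle it in one of two ways. One option is to add the hypothesis that the trajectory remains in \(D\) on the interval. The other is to argue that if \(\vec{x}_k\) lies in \(S\), which is contained in the open domain \(D\), then for \(\Delta_t\) small enough the reachable ball of radius \(\Delta_t(b_F+b_Gb_u)\) stays inside \(D\). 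For the latter I would use a continuation argument on the first exit time from \(D\). I would state that assumption explicitly rather than grind through it.
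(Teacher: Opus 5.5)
Your proof is correct and matches the paper's argument: triangle inequality on the three terms of \(\Psi\), Lipschitz bounds on \(D\) with \(\nrm{\vec{u}_k}_2 \leq b_u\) for the input term, then substitution of \eqref{eq:bound_on_x}. The paper never justifies \(\vec{x}(t) \in D\) on the interval (it is silently built into the suprema over \(D\) in \eqref{eq:bound_on_x}), so stating it as an explicit hypothesis is the right fix; your small-\(\Delta_t\) alternative would only cover \(\vec{x}_k \in S\) when \(S\) has a uniform positive distance to \(\R^n \setminus D\) (e.g.\ \(S\) compact), not all \(\vec{x}_k \in D\) as the lemma claims.
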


\begin{proof}
    Observe that
    \begin{align*}
        &\nrm{\Psi(\vec{x}(t), \vec{u}_k) - \Psi(\vec{x}_k, \vec{u}_k)}_2 \leq  \nrm{\mathcal{L}_F H(\vec{x}(t)) - \mathcal{L}_F H(\vec{x}_k)}_2 \\
        & \hspace{1em} +
        \nrm{\mathcal{L}_G H(\vec{x}(t)) - \mathcal{L}_G H(\vec{x}_k)}_2 \nrm{\vec{u}_k}_2 \\
        &\hspace{1em} +
        c_\alpha \nrm{H(\vec{x}(t)) - H(\vec{x}_k)}_2.
    \end{align*}
    By Assumption \ref{assume:Lipschitz_D}, we have
\begin{align}
    \begin{aligned}
    &\nrm{\Psi(\vec{x}(t), \vec{u}_k) - \Psi(\vec{x}_k, \vec{u}_k)}_2 \leq \\
    &\hspace{2em}(L_{FH} + L_{GH} b_u + c_\alpha L_{H})\nrm{\vec{x}(t) - \vec{x}_k}_2.
    \end{aligned}
\end{align}
Substituting in \eqref{eq:bound_on_x} yields
\begin{align}
    \begin{aligned}
    &\nrm{\Psi(\vec{x}(t), \vec{u}_k) - \Psi(\vec{x}_k, \vec{u}_k)}_2 \leq \\
    &\hspace{2em}\Delta_t (L_{FH} + L_{GH} b_u + c_\alpha L_{H}) (b_F + b_G b_u),
    \end{aligned}
\end{align}
which concludes the proof.
\end{proof}
For brevity, we define
\begin{align*}
    \delta(\Delta_t, c_\alpha) \triangleq \Delta_t (L_{FH} + L_{GH} b_u + c_\alpha L_{H}) (b_F + b_G b_u),
\end{align*}
and write \(E(t_k, \vec{x}_k, \vec{u}_k) \leq \delta(\Delta_t, c_\alpha)\).

Given this bound, we are now prepared to present a novel extension of Exponential MCBFs to multi-agent, zero-order hold settings.

\begin{define}
    \label{def:ZE_MCBF}
     Let \(H : \R^n \to \mathbb{S}^p\), \(H \in \mathcal{C}_{loc}^{1,1}\) be a matrix valued function and let \(S\) be defined as in \eqref{eq:safe_set}.
     Let \(D \subseteq \R^n\) be a domain such that \(S \subset D\).
     The function \(H\) is called a Zero-order-hold Exponential Matrix Control Barrier Function (ZE-MCBF) for the dynamics \eqref{eq:multi_agent_dynamics} if there exists a constant \(c_\alpha > 0\) such that for all \(\vec{x} \in D\) there exists a \(\vec{u} \in \mathcal{U}\) satisfying
    \begin{align}
        \mathcal{L}_F H(\vec{x}) + (\mathcal{L}_G H(\vec{x}))(\vec{u}) \ggeq -c_\alpha H(\vec{x}) + \delta(\Delta_t, c_\alpha)I. \label{eq:ZEMCBF_cond}
    \end{align}
\end{define}

Define the set of feasible control inputs satisfying \eqref{eq:ZEMCBF_cond} as
\begin{align}
    &K(\vec{x}) \triangleq \brc{\vec{u} \in \mathcal{U} : \Psi(\vec{x}, \vec{u}) \ggeq \delta(\Delta_t, c_\alpha) I}, \label{eq:Kset} \\
    &= \big\{ \vec{u} \in \mathcal{U} : \mathcal{L}_F H(\vec{x}) + (\mathcal{L}_G H(\vec{x}))(\vec{u}) \nonumber \\
    &\hspace{6em} + c_\alpha H(\vec{x}) \ggeq \delta(\Delta_t, c_\alpha)I \big\}. \nonumber
\end{align}
Observe that this is equivalent to requiring the smallest eigenvalue to satisfy \(\lambda_1\pth{\dot{H}(\vec{x}, \vec{u}) + c_\alpha H(\vec{x})} \geq \delta(\Delta_t, c_\alpha)\).

\begin{theorem}
    \label{thm:ZE-MCBF}
    Consider the dynamics \eqref{eq:multi_agent_dynamics} and the set \(S\) in \eqref{eq:safe_set}. If \(H\) is a ZE-MCBF, then any feedback control law \(\vec{\kappa}(\cdot)\) satisfying \(\vec{\kappa}(\vec{x}_k) \in K(\vec{x}_k)\) for all \(t_k\), \(k \in \Nat\) renders \(S\) forward invariant for the closed-loop system.
\end{theorem}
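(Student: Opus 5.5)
We argue by induction over the sampling intervals, reducing each interval to a continuous-time invariance statement of the type in Proposition \ref{prop:mcbf}. Assume \(\vec{x}_k \in S\), which holds at \(k=0\) if \(\vec{x}(t_0)\in S\). Then \(\vec{x}_k \in D\), so by Definition \ref{def:ZE_MCBF} the set \(K(\vec{x}_k)\) is nonempty and \(\vec{u}_k = \vec{\kappa}(\vec{x}_k)\) is well defined. By construction it satisfies \(\lambda_1(\Psi(\vec{x}_k,\vec{u}_k)) \geq \delta(\Delta_t, c_\alpha)\). We must show that \(\vec{x}(t) \in S\) for all \(t \in [t_k, t_{k+1}]\). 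By continuity of the trajectory this also gives \(\vec{x}_{k+1} \in S\), which closes the induction.

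Within the interval, combine Lemma \ref{lemma:E_bound} with Corollary \ref{eq:Weyl_corollary}, taking \(A = \Psi(\vec{x}(t),\vec{u}_k)\) and \(B = \Psi(\vec{x}_k,\vec{u}_k)\). This gives
\begin{align*}
\lambda_1\pth{\Psi(\vec{x}(t),\vec{u}_k)} \geq \delta(\Delta_t,c_\alpha) - \nrm{\Psi(\vec{x}(t),\vec{u}_k)-\Psi(\vec{x}_k,\vec{u}_k)}_2 \geq 0,
\end{align*}
so \(\dot H(\vec{x}(t),\vec{u}_k) \ggeq -c_\alpha H(\vec{x}(t))\) along the trajectory. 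We then conclude \(H(\vec{x}(t))\ggeq 0\) directly rather than quoting Proposition \ref{prop:mcbf}, whose hypothesis concerns a neighborhood of \(S\) and an autonomous field. For any fixed unit vector \(v\), set \(h_v(t) = v^\intercal H(\vec{x}(t)) v\). This function is absolutely continuous and satisfies \(\dot h_v \geq -c_\alpha h_v\), so the comparison lemma yields \(h_v(t) \geq e^{-c_\alpha(t-t_k)} h_v(t_k) \geq 0\). Since \(v\) is arbitrary, \(H(\vec{x}(t)) \ggeq 0\).

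The main obstacle is circularity. Lemma \ref{lemma:E_bound} and the bound \eqref{eq:bound_on_x} rely on Assumptions \ref{assume:Lipschitz_D} and \ref{assume:norm_bounds}, which hold only while \(\vec{x}(\tau)\in D\) for \(\tau\in[t_k,t]\). We handle this with a continuation argument. Let \(t^* = \sup\{t \in [t_k,t_{k+1}] : \vec{x}(\tau)\in D\ \forall \tau\in[t_k,t]\}\). Since \(D\) is a domain, hence open, and contains \(\vec{x}_k\), we have \(t^*>t_k\). On \([t_k,t^*)\) the argument above applies, so the trajectory stays in \(S\). Suppose \(t^*<t_{k+1}\). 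By continuity \(\vec{x}(t^*)\in \overline{S}\), the closure of \(S\), which equals \(S\) because \(S\) is closed by continuity of \(H\). Hence \(\vec{x}(t^*)\in S \subset D\), and openness of \(D\) lets us extend past \(t^*\), contradicting its definition. Therefore \(t^* = t_{k+1}\), and the trajectory remains in \(S\subset D\) on the whole interval. This also justifies applying the Lipschitz and bound estimates uniformly, completing the induction over all \(k\in\Nat\).
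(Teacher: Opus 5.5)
Your proof is correct and is essentially the paper's: Lemma \ref{lemma:E_bound} combined with Corollary \ref{eq:Weyl_corollary} gives \(\lambda_1(\Psi(\vec{x}(t),\vec{u}_k)) \geq \delta(\Delta_t,c_\alpha) - \delta(\Delta_t,c_\alpha) = 0\) on each sampling interval, hence \(\dot{H} \ggeq -c_\alpha H\) along the trajectory and \(H(\vec{x}(t)) \ggeq 0\) interval by interval. Where you deviate, you tighten steps the paper glosses over rather than taking a different route. You replace Proposition \ref{prop:mcbf} by a scalar comparison on \(v^\intercal H(\vec{x}(t))v\); this is apt, since that proposition assumes the barrier inequality on an open neighborhood of \(S\), whereas here it is only verified along the trajectory. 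You add a continuation argument keeping \(\vec{x}(t)\in D\), so that Lemma \ref{lemma:E_bound} genuinely applies. You use closedness of \(S\) to obtain \(\vec{x}_{k+1}\in S\).
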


\begin{proof}
    Let \(\vec{x}(t_0) \in S\).
    Choose any feedback control law such that \(\vec{\kappa}(\vec{x}_k) \in K(\vec{x}_k)\) for all \(k \in \Nat\).
    From Lemma \ref{lemma:E_bound} we have \(E(t_k, \vec{x}_k, \vec{u}_k) \leq \delta(\Delta_t, c_\alpha)\) for all \(t \in [t_k, t_{k+1})\).
    By Corollary \ref{eq:Weyl_corollary} and the definition of \(K(\vec{x})\) in \eqref{eq:Kset}, the following holds for all \(t \in [t_k, t_{k+1})\):
    \begin{align*}
        &\lambda_1\pth{\Psi(\vec{x}(t), \vec{\kappa}(\vec{x}))} \geq  \\
        &\hspace{2em} \lambda_1\pth{\Psi(\vec{x}_k, \vec{\kappa}(\vec{x}_k))} - \nrm{\Psi(\vec{x}(t), \vec{\kappa}(\vec{x})) - \Psi(\vec{x}_k, \vec{\kappa}(\vec{x}_k))}_2, \\
        &\hspace{1em} \geq \lambda_1\pth{\Psi(\vec{x}_k, \vec{\kappa}(\vec{x}_k))} - E(t_k, \vec{x}_k, \vec{u}_k), \\
        &\hspace{1em} \geq  \lambda_1\pth{\Psi(\vec{x}_k, \vec{\kappa}(\vec{x}_k))} - \delta(\Delta_t, c_\alpha), \\
        &\hspace{1em} \geq \lambda_1\pth{\delta(\Delta_t, c_\alpha)I} - \delta(\Delta_t, c_\alpha), \\
        &\hspace{1em} \geq 0.
    \end{align*}
    It follows that \(\Psi(\vec{x}(t), \vec{\kappa}(\vec{x}_k)) = \dot{H}(\vec{x}(t), \kappa(\vec{x}_k)) + c_\alpha H(\vec{x}(t)) \ggeq 0\) for all \(t \in [t_k, t_{k+1})\).
    By Proposition \ref{prop:mcbf}, the set \(S\) is forward invariant on the interval \([t_k, t_{k+1})\).
    Since this holds for all \(k \in \Nat\), the set \(S\) is therefore forward invariant for all \(t \geq t_0\).
\end{proof}
We point out that the ZOH feedback law \(\vec{\kappa}(\cdot)\) is not required to be continuous, since existence and uniqueness of solutions for the sampled-data system are guaranteed by Carath\'eodory's theorem as noted previously. An example of a feedback law satisfying \(\vec{\kappa}(\vec{x}_k) \in K(\vec{x}_k)\) is given by the following parametric convex conic program, which minimally modifies a nominal control input \(\vec{u}_\text{nom}\):
\begin{alignat}{2}
        &\vec{\kappa}\pth{\vec{x}} \triangleq \min_{\vec{u} \in \R^m} \hspace{1em} \nrm{\vec{u} - \vec{u}_\text{nom}}_2^2 \label{eq:kappa_convex_program}\\
        & \text{s.t.} \hspace{1em} \mathcal{L}_F H(\vec{x}) + (\mathcal{L}_G H(\vec{x}))(\vec{u}) \ggeq -c_\alpha H(\vec{x}) + \delta(\Delta_t, c_\alpha)I, \nonumber \\
        &\hspace{2em}\vec{u} \in \mathcal{U}. \nonumber
\end{alignat}
Observe that the objective is quadratic and convex in \(\vec{u}\), the first constraint is a convex semidefinite constraint in \(\vec{u}\), and the second constraint is a convex conic constraint.

\subsection{Adversarial Robustness for Exponential Matrix Control Barrier Functions}
\label{sec:adversarial}

We next consider the presence of adversarial agents. The definition of adversarial agents is as follows:
\begin{define}
    An adversarial agent with index \(j \in \{1,\ldots,N\}\) is an agent that applies the following control input for all sampling times \(t_k,\ k \in \Nat\):
    \begin{align}
        u^j_{\min}(\vec{x}) \triangleq \underset{u \in \mathcal{U}_j}{\arg\min} \bkt{\mathcal{L}_{g^j} H(\vec{x}) u} \label{eq:adversarial_u}
    \end{align}
\end{define}
In words, adversarial agents apply maximum possible control authority towards attempting to violate the set invariance condition in \eqref{eq:orig_mcbf_safety_cond}.
Observe that each \(u^j_{\min}(\vec{x})\) can be computed via convex optimization methods since the objective and constraints are both convex in the optimization variable.

The set of adversarial agent indices is denoted \(\mathcal{A} \subset \{1,\ldots,N\}\). Agents that are not adversarial are called \emph{normal}. The set of normal agent indices is denoted \(\mathcal{N} = \{1,\ldots,N\} \backslash \mathcal{A}\).
We denote
the vectors of normal and adversarial control inputs as \(\vec{u}^{\mathcal{N}} \triangleq \bmx{(u^{i_1})^\intercal, \cdots, (u^{i_{|\mathcal{N}|}})^\intercal}^\intercal,\ i_q \in \mathcal{N}\) and
\begin{align}
    \label{eq:adv_vec_u}
    \vec{u}^{\mathcal{A}}(\vec{x}) \triangleq \bmx{\pth{u^{j_1}_{\min}(\vec{x})}^\intercal, \cdots, \pth{u^{j_{|\mathcal{A}|}}_{\min}(\vec{x})}^\intercal}^\intercal,
\end{align}
with \(j_q \in \mathcal{A}\).
The Cartesian products of normal and adversarial feasible control input sets are denoted \(\mathcal{U}^{\mathcal{N}} \triangleq \bigtimes_{i \in \mathcal{N}} \mathcal{U}^{i}\) and \(\mathcal{U}^{\mathcal{A}} \triangleq \bigtimes_{j \in \mathcal{A}} \mathcal{U}^{j}\).

We are interested in finding a feedback control law for the normal agents \(\vec{\kappa}^\mathcal{N}(\vec{x})\) that renders the set \(S\) forward invariant despite the adversarial behavior described in \eqref{eq:adversarial_u}.
In the presence of these adversarial control inputs, the expression for \(\dot{H}\) becomes
\begin{align}
    &\dot{H}(\vec{x}, \vec{u}^\mathcal{N}) = \mathcal{L}_F H(\vec{x}) + \label{eq:Hdot_adversarial}\\
    &\hspace{1em} \pth{\sum_{i \in \mathcal{N}} (\mathcal{L}_{g^i} H(\vec{x}))(u^{i})} +
     \pth{\sum_{j \in \mathcal{A}}  (\mathcal{L}_{g^j} H(\vec{x}) )
     (u^j_{\min}(\vec{x}))} \nonumber
\end{align}
Similar to the previous subsection, define the function
\begin{align}
    \Psi^\mathcal{N}(\vec{x}, \vec{u}^{\mathcal{N}}) \triangleq \dot{H}(\vec{x}, \vec{u}^\mathcal{N}) + c_\alpha H(\vec{x}). \label{eq:PsiN}
\end{align}
As before, the existence of a constant \(\vec{u}^\mathcal{N}_k\) such that \(\Psi(\vec{x}, \vec{u}^\mathcal{N}) \ggeq 0\) \(\forall t \in [t_k, t_{k+1})\) is a sufficient condition for ensuring \(H(x(t)) \ggeq 0\) \(\forall t \in [t_k, t_{k+1})\). We define the following maximum normed error between the value of \(\Psi\) at the sampled time \(t_k\) and the current time \(t \in [t_k, t_{k+1})\):
\begin{align*}
    E^\mathcal{N}(t_k, \vec{x}_k, \vec{u}_k^\mathcal{N}) &\triangleq  \sup_{t \in [t_k, t_{k+1})} \nrm{\Psi(\vec{x}(t), \vec{u}_k^\mathcal{N}) - \Psi(\vec{x}_k, \vec{u}_k^\mathcal{N})}_2.
\end{align*}

\begin{lemma}
    \label{lemma:adversarial_E}
    The following holds for all \(\vec{x}_k \in D\) and \(\vec{u}_k \in \mathcal{U}\):
    \begin{align*}
        E^\mathcal{N}(t_k, \vec{x}_k, \vec{u}_k) \leq \Delta_t (L_{FH} + L_{GH}b_u + c_\alpha L_H) (b_F + b_G b_u),
    \end{align*}
\end{lemma}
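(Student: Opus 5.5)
The plan is to reduce Lemma \ref{lemma:adversarial_E} to Lemma \ref{lemma:E_bound} by showing that, on each sampling interval, the adversarial closed loop is again a ZOH system with a single constant stacked input that lies in \(\mathcal{U}\).

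\textbf{Step 1: fix the interpretation.} By the definition of adversarial agents, each adversary \(j \in \mathcal{A}\) applies \(u^j_{\min}(\vec{x})\) only at the sampling times \(t_k\). Like every agent, it holds this value under the zero-order hold. So on \([t_k, t_{k+1})\) its input is the constant \(u^j_k \triangleq u^j_{\min}(\vec{x}_k) \in \mathcal{U}^j\). Inside \(\Psi^\mathcal{N}(\vec{x}(t), \vec{u}^\mathcal{N}_k)\) from \eqref{eq:PsiN}, the adversarial term in \eqref{eq:Hdot_adversarial} is therefore \((\mathcal{L}_{g^j} H(\vec{x}(t)))(u^j_k)\): the state argument of the Lie derivative moves with \(t\), but the input \(u^j_k\) stays frozen.

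\textbf{Step 2: stack the inputs.} Define the combined constant input \(\vec{u}_k\) by placing \(\vec{u}^\mathcal{N}_k \in \mathcal{U}^\mathcal{N}\) and \(\vec{u}^\mathcal{A}(\vec{x}_k) \in \mathcal{U}^\mathcal{A}\) in their agent slots. Then \(\vec{u}_k \in \mathcal{U}\), and hence \(\nrm{\vec{u}_k}_2 \le b_u\).

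Because \(G\) is block diagonal, the sum over \(i \in \mathcal{N}\) plus the sum over \(j \in \mathcal{A}\) in \eqref{eq:Hdot_adversarial} is exactly \((\mathcal{L}_G H(\vec{x}))(\vec{u}_k)\). This gives the identity
\begin{align*}
\Psi^\mathcal{N}(\vec{x}(t), \vec{u}^\mathcal{N}_k) = \Psi(\vec{x}(t), \vec{u}_k) \quad \text{for all } t \in [t_k, t_{k+1}),
\end{align*}
including at \(t = t_k\). Consequently \(E^\mathcal{N}(t_k, \vec{x}_k, \vec{u}^\mathcal{N}_k) = E(t_k, \vec{x}_k, \vec{u}_k)\).

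\textbf{Step 3: reuse the earlier bounds.} The closed-loop trajectory on the interval solves \eqref{eq:multi_agent_dynamics} with a constant input in \(\mathcal{U}\). So the state bound \eqref{eq:bound_on_x} holds unchanged, and Lemma \ref{lemma:E_bound} applies directly, giving the claimed bound \(\delta(\Delta_t, c_\alpha)\).

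As a self-contained alternative, I would repeat the triangle-inequality split from the proof of Lemma \ref{lemma:E_bound} into the \(\mathcal{L}_F H\), \(\mathcal{L}_G H\), and \(c_\alpha H\) terms, bounding each by its Lipschitz constant from Assumption \ref{assume:Lipschitz_D} times \eqref{eq:bound_on_x}.

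\textbf{Main obstacle.} The key step is Step 1, justifying that the adversarial input is frozen over the interval. If \(u^j_{\min}(\vec{x}(t))\) were instead re-evaluated continuously, the difference \(\Psi^\mathcal{N}(\vec{x}(t), \cdot) - \Psi^\mathcal{N}(\vec{x}_k, \cdot)\) would contain the extra term \(\mathcal{L}_{g^j} H(\vec{x}_k)\bigl(u^j_{\min}(\vec{x}(t)) - u^j_{\min}(\vec{x}_k)\bigr)\). An argmin need not be Lipschitz, or even continuous, in \(\vec{x}\), so this term could not be controlled by the constants in Assumption \ref{assume:Lipschitz_D}. I would therefore state explicitly that adversaries obey the same ZOH sampling as normal agents, which is what the phrase ``for all sampling times \(t_k\)'' in their definition indicates. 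Under that reading, the rest of the argument is the verbatim reuse of Lemma \ref{lemma:E_bound} described above.
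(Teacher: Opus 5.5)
Your proposal is correct and follows the paper's route. The paper also proves this by rerunning Lemma~\ref{lemma:E_bound}, using only that each adversarial input $u^j_{\min}(\vec{x})$ lies in $\mathcal{U}^j$, so the stacked input has norm at most $b_u$; your explicit point that the adversarial input is held at $u^j_{\min}(\vec{x}_k)$ over the interval, so that no term in $u^j_{\min}(\vec{x}(t)) - u^j_{\min}(\vec{x}_k)$ appears, is the zero-order-hold assumption the paper's one-line proof leaves implicit.
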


\begin{proof}
    Follows from the proof of Lemma \ref{lemma:E_bound} and the fact that \(\nrm{u^j_{\min}(\vec{x})}_2 \leq \sup_{u^j \in \mathcal{U}^j} \nrm{u^j}_2\) \(\forall \vec{x} \in \R^n\), \(\forall j \in \mathcal{A}\).
\end{proof}

The following definition extends the notion of ZE-MCBFs to incorporate robustness against adversarial actions.

\begin{define}
    \label{def:ARZE-MCBF}
    Let \(H : \R^n \to \mathbb{S}^p\), \(H \in \mathcal{C}_{loc}^{1,1}\) be a matrix valued function and let \(S\) be defined as in \eqref{eq:safe_set}.
    Let \(D \subseteq \R^n\) be a domain such that \(S \subset D\).
    The function \(H\) is called an Adversarially Robust Zero-order-hold Exponential Matrix Control Barrier Function (ARZE-MCBF) for the dynamics \eqref{eq:multi_agent_dynamics} if there exists a constant \(c_\alpha > 0\) such that for all \(\vec{x} \in D\) there exists a \(\vec{u}^\mathcal{N} \in \mathcal{U}^\mathcal{N}\) satisfying
    \begin{align}
        &\mathcal{L}_F H(\vec{x}) + \pth{\sum_{i \in \mathcal{N}} (\mathcal{L}_{g^i} H(\vec{x}))(u^{i})} + \label{eq:ZEMCBF_cond} \\
     &\hspace{1em }\pth{\sum_{j \in \mathcal{A}}  (\mathcal{L}_{g^j} H(\vec{x}))
     (u^j_{\min}(\vec{x}))} \ggeq - c_\alpha H(\vec{x}) +   \delta(\Delta_t, c_\alpha) I. \nonumber
    \end{align}
\end{define}

We define the set of normal agent feasible control inputs satisfying \eqref{eq:ZEMCBF_cond} as follows:
\begin{align}
    &\mathcal{K}^\mathcal{N}(\vec{x}) \triangleq \brc{\vec{u}^\mathcal{N} \in \mathcal{U}^\mathcal{N} : \Psi^\mathcal{N}(\vec{x}, \vec{u}^\mathcal{N}) \ggeq \delta(\Delta_t, c_\alpha) I}, \label{eq:Knormalset} \\
    &\hspace{1em} = \Bigg\{ \vec{u}^\mathcal{N} \in \mathcal{U} : \mathcal{L}_F H(\vec{x}) + \pth{\sum_{i \in \mathcal{N}} (\mathcal{L}_{g^i} H(\vec{x}))(u^{i})} + \nonumber \\
     &\pth{\sum_{j \in \mathcal{A}}  (\mathcal{L}_{g^j} H(\vec{x}))
     (u^j_{\min}(\vec{x}))} \ggeq - c_\alpha H(\vec{x}) +   \delta(\Delta_t, c_\alpha) I. \nonumber \Bigg\}. \nonumber
\end{align}

\begin{theorem}
    \label{thm:ARZE-MCBF}
    Consider the dynamics \eqref{eq:multi_agent_dynamics} and the set \(S\) in \eqref{eq:safe_set}. If \(H\) is a ARZE-MCBF, then any feedback control law for the normal agents \(\vec{\kappa}^\mathcal{N}(\cdot)\) satisfying \(\vec{\kappa}^\mathcal{N}(\vec{x}_k) \in K^\mathcal{N}(\vec{x}_k)\) for all \(t_k\), \(k \in \Nat\) renders \(S\) forward invariant for the closed-loop system.
\end{theorem}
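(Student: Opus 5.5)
The plan mirrors the proof of Theorem \ref{thm:ZE-MCBF}, with $\Psi^\mathcal{N}$ in place of $\Psi$ and Lemma \ref{lemma:adversarial_E} in place of Lemma \ref{lemma:E_bound}. Fix $\vec{x}(t_0) \in S$ and a normal-agent feedback law with $\vec{u}^\mathcal{N}_k \triangleq \vec{\kappa}^\mathcal{N}(\vec{x}_k) \in \mathcal{K}^\mathcal{N}(\vec{x}_k)$ for every $k \in \Nat$. The adversaries apply $u^j_{\min}(\vec{x}_k)$, $j \in \mathcal{A}$, held constant on $[t_k, t_{k+1})$.

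The closed-loop input on that interval is the stacked vector $\vec{u}_k \in \mathcal{U}$ formed from $\vec{u}^\mathcal{N}_k$ and $\vec{u}^\mathcal{A}(\vec{x}_k)$, so the trajectory is governed by \eqref{eq:multi_agent_dynamics}. Consequently \eqref{eq:bound_on_x} holds with $\nrm{\vec{u}_k}_2 \le b_u$. Along the trajectory, $\Psi^\mathcal{N}(\vec{x}(t),\vec{u}^\mathcal{N}_k)$ equals $\mathcal{L}_F H(\vec{x}(t)) + (\mathcal{L}_G H(\vec{x}(t)))(\vec{u}_k) + c_\alpha H(\vec{x}(t))$, i.e.\ $\Psi(\vec{x}(t),\vec{u}_k)$.

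\textbf{Main subtlety.} In \eqref{eq:Hdot_adversarial} the adversarial term is written as $u^j_{\min}(\vec{x})$, evaluated at the current state. The physical ZOH input, however, is frozen at $u^j_{\min}(\vec{x}_k)$. I will interpret $E^\mathcal{N}$ with the adversarial input held at its sampled value, so that the difference being bounded is $\Psi(\vec{x}(t),\vec{u}_k) - \Psi(\vec{x}_k,\vec{u}_k)$ for a single fixed $\vec{u}_k \in \mathcal{U}$. Lemma \ref{lemma:adversarial_E}, which itself reduces to Lemma \ref{lemma:E_bound}, then gives $E^\mathcal{N} \le \delta(\Delta_t,c_\alpha)$. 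The only facts used are that the adversarial components lie in $\mathcal{U}^j$, so their norm is covered by $b_u$, and that $\vec{x}(t)$ stays in $D$ over the interval; the latter is inherited from the Lipschitz argument, exactly as in Theorem \ref{thm:ZE-MCBF}.

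If the adversary were instead allowed to update continuously, I would need an extra Lipschitz bound on $\vec{x}\mapsto u^j_{\min}(\vec{x})$. That is not assumed, so I will stick with the ZOH reading consistent with the problem formulation.

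With the bound in hand, apply Corollary \ref{eq:Weyl_corollary} with $A = \Psi^\mathcal{N}(\vec{x}(t),\vec{u}^\mathcal{N}_k)$ and $B = \Psi^\mathcal{N}(\vec{x}_k,\vec{u}^\mathcal{N}_k)$. This gives $\lambda_1(A) \ge \lambda_1(B) - E^\mathcal{N} \ge \delta(\Delta_t,c_\alpha) - \delta(\Delta_t,c_\alpha) = 0$, where the membership $\vec{u}^\mathcal{N}_k \in \mathcal{K}^\mathcal{N}(\vec{x}_k)$ supplies $\lambda_1(B) \ge \delta(\Delta_t,c_\alpha)$. Hence $\dot H(\vec{x}(t)) + c_\alpha H(\vec{x}(t)) \ggeq 0$ on $[t_k,t_{k+1})$ along the actual closed-loop trajectory, adversaries included.

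Proposition \ref{prop:mcbf}, applied to the time-varying but piecewise-constant vector field on each interval, then gives $H(\vec{x}(t)) \ggeq 0$ there, as in Theorem \ref{thm:ZE-MCBF}. By continuity of solutions, $\vec{x}(t_{k+1}) \in S$. Induction on $k$ yields $\vec{x}(t) \in S$ for all $t \ge t_0$.
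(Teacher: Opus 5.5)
Your proposal is correct and follows the paper's route: the paper's proof simply reruns the proof of Theorem~\ref{thm:ZE-MCBF} with $\Psi^{\mathcal{N}}$ in place of $\Psi$ and Lemma~\ref{lemma:adversarial_E} in place of Lemma~\ref{lemma:E_bound}. Your reading of the adversarial input as frozen at $u^j_{\min}(\vec{x}_k)$ on $[t_k,t_{k+1})$ is exactly what makes the paper's one-line proof of Lemma~\ref{lemma:adversarial_E} valid; one minor caveat, shared with the paper, is that $\vec{x}(t)\in D$ is not ``inherited from the Lipschitz argument'' (that argument presupposes it) but follows from a short continuation argument using that $S$ is closed, $S\subset D$ with $D$ open, and $H(\vec{x}(t))\ggeq 0$ on any initial subinterval where the trajectory stays in $D$.
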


\begin{proof}
    Follows from similar arguments as Theorem \ref{thm:ZE-MCBF}.
\end{proof}

Feasible control inputs within \(K^{\mathcal{N}}(\vec{x})\) can be computed using convex optimization methods similar to \eqref{eq:kappa_convex_program}.

\subsection{Systems with High Relative Degree}
\label{sec:high_order}

We now consider scenarios where the system \eqref{eq:multi_agent_dynamics} has relative degree greater than one with respect to the function \(H\).
To analyze the evolution of \(H\) and its time derivatives on each of the intervals \(t \in [t_k, t_{k+1})\), \(k \in \Nat\),
we define the following set of \(\Psi_q\) functions:
\begin{align}
    \Psi_0(\vec{x}) &\triangleq H(\vec{x}), \label{eq:Psi_high_degree}\\
    \Psi_q(\vec{x}) &= \dot{\Psi}_{q-1}(\vec{x}) + c_{\alpha_q}(\Psi_{q-1}(\vec{x})),\ 1 \leq q \leq r-1, \nonumber \\
    \Psi_{r}(\vec{x}, \vec{u}^\mathcal{N}) &\triangleq \dot{\Psi}_{r-1}(\vec{x}, \vec{u}^\mathcal{A}_{\min}(\vec{x}), \vec{u}^\mathcal{N}) + c_{\alpha_{r}} \Psi_{r-1}(\vec{x}), \nonumber \\
    &= \mathcal{L}_F \Psi_{r-1}(\vec{x}) + \pth{\sum_{i \in \mathcal{N}} (\mathcal{L}_{g^i} \Psi_{r-1}(\vec{x}))(u^{i})} + \nonumber \\
     & \hspace{-2em}\pth{\sum_{j \in \mathcal{A}}  (\mathcal{L}_{g^j} \Psi_{r-1}(\vec{x})) +
     (u^j_{\min}(\vec{x}))} + c_{\alpha_{r}} \Psi_{r-1}(\vec{x}). \nonumber
\end{align}
Here, the constants \(c_{\alpha_q}\) satisfy \(c_{\alpha_q} \geq 0\), \( 1 \leq q \leq r\).
The integer \(r \in \mathbb{Z}_\geq 1\) is the smallest integer such that control inputs \(\vec{u}^\mathcal{N}\) or \(\vec{u}^\mathcal{A}(\vec{x})\) appear in \(\Psi_{r}\) and are absent in \(\Psi_q\), \(0 \leq q \leq r -1\).
We further assume that \(\vec{u}^\mathcal{A}(\vec{x}), \vec{u}^\mathcal{N}\) both appear in the expression for \(\Psi_r\). Due to space constraints, extensions to more general settings are left for future work.

Similar to previous subsections, let the error function \(E_r^\mathcal{N}\) be defined as
\begin{align*}
    E_r^\mathcal{N}(t_k, \vec{x}_k, \vec{u}^\mathcal{N}_k) \triangleq \sup_{t \in [t_k, t_{k+1})} \nrm{\Psi_r(\vec{x}(t), \vec{u}^\mathcal{N}_k) - \Psi_r(\vec{x}_k, \vec{u}^\mathcal{N}_k)}.
\end{align*}

\begin{assume}
    The functions \(\mathcal{L}_F \Psi_{r-1}\) and \(\mathcal{L}_G \Psi_{r-1}\) are Lipschitz continuous on \(D\) with respective constants \(\widehat{L}_{FH}, \widehat{L}_{GH}\).
\end{assume}

Similar to Lemmas \ref{lemma:E_bound} and \ref{lemma:adversarial_E}, it can be shown that
\begin{align*}
        E^\mathcal{N}(t_k, \vec{x}_k, \vec{u}_k) \leq \Delta_t (\widehat{L}_{FH} + \widehat{L}_{GH}b_u + c_{\alpha_r} L_H) (b_F + b_G b_u).
\end{align*}
We likewise define
\begin{align*}
    \delta_r(\Delta_t, c_{\alpha_r}) \triangleq \Delta_t (\widehat{L}_{FH} + \widehat{L}_{GH}b_u + c_{\alpha_r} L_H) (b_F + b_G b_u).
\end{align*}
The following subsets \(S_q \subset \R^n\) are defined in terms of the superlevel sets of each corresponding \(\Psi_q\) function:
\begin{subequations}
\label{eq:S_q_equations}
    \begin{align}
    S_q &\triangleq \{\vec{x} \in \R^n : \Psi_{q-1}(\vec{x}) \ggeq 0\},\ 1 \leq q \leq r, \\
    S_Q &\triangleq \bigcap_{q=1}^r S_q. \label{eq:S_Q}
    \end{align}
\end{subequations}
Similar to higher order methods for scalar-valued CBFs, these \(S_q\) sets form a hierarchy of sets involved with guaranteeing forward invariance of \(S_1 = \{x \in \R^n : H(x) \ggeq 0\}\). Intuitively, each set \(S_q\) corresponds to the states that ensure the previous set \(S_{q-1}\) remains forward invariant under \eqref{eq:multi_agent_dynamics}.

\begin{define}
    Let \(H : \R^n \to \mathbb{S}^p\) be a matrix valued function that is at least \(r\)-times continuously differentiable.
    Let \(S\) be defined as in \eqref{eq:safe_set}.
    Let \(D \subseteq \R^n\) be a domain such that \(S \subset D\).
    The function \(H\) is called a High-Order ARZE-MCBF for the dynamics \eqref{eq:multi_agent_dynamics} if there exists constants \(c_{\alpha_1}, \ldots, c_{\alpha_r} > 0\) such that for all \(\vec{x} \in D\) there exists a \(\vec{u}^\mathcal{N} \in \mathcal{U}^\mathcal{N}\) satisfying
    \begin{align}
        \Psi_r(\vec{x}, \vec{u}^\mathcal{N}) \ggeq \delta_r(\Delta_t, c_{\alpha_r}) I. \label{eq:disturbance_safety_cond}
    \end{align}
\end{define}
The set of feasible normal agent control inputs satisfying \eqref{eq:disturbance_safety_cond} is denoted
\begin{align}
    K_r^\mathcal{N} \triangleq \brc{\vec{u}^\mathcal{N} \in \mathcal{U}^\mathcal{N} : \Psi_r(\vec{x}, \vec{u}^\mathcal{N}) \ggeq \delta_r(\Delta_t, c_{\alpha_r}) I}. \label{eq:K_r_set}
\end{align}
Our final result presents conditions under which the sets \(S_q\) are rendered forward invariant under the dynamics \eqref{eq:multi_agent_dynamics}.
\begin{theorem}
    \label{thm:HOARZE-MCBF}
    Consider the dynamics \eqref{eq:multi_agent_dynamics} and the set \(S_Q\) in \eqref{eq:S_Q}. If \(H\) is a High-Order ARZE-MCBF, then any feedback control law for the normal agents \(\vec{\kappa}^\mathcal{N}(\cdot)\) satisfying \(\vec{\kappa}^\mathcal{N}(\vec{x}_k) \in K^\mathcal{N}_r(\vec{x}_k)\) for all \(t_k\), \(k \in \Nat\) renders the set \(S_Q\) forward invariant for the closed-loop system.
\end{theorem}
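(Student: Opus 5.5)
The argument has two parts. First, as in Theorem \ref{thm:ZE-MCBF}, the ZOH error bound shows that \(\Psi_r \ggeq 0\) on each whole sampling interval. Second, a backward cascade of Proposition \ref{prop:mcbf} pushes invariance from \(S_r\) down to \(S_1\).

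\textbf{Step 1: \(\Psi_r\) stays positive semidefinite on each interval.} Fix \(k\) and let \(\vec{u}^\mathcal{N}_k = \vec{\kappa}^\mathcal{N}(\vec{x}_k) \in K_r^\mathcal{N}(\vec{x}_k)\), so that \(\lambda_1(\Psi_r(\vec{x}_k,\vec{u}^\mathcal{N}_k)) \geq \delta_r(\Delta_t,c_{\alpha_r})\). The bound \(E_r^\mathcal{N} \leq \delta_r(\Delta_t, c_{\alpha_r})\) stated before the definition holds while \(\vec{x}(t)\) remains in \(D\). Its derivation repeats Lemma \ref{lemma:adversarial_E} with the Lipschitz constants of \(\mathcal{L}_F\Psi_{r-1}\) and \(\mathcal{L}_G\Psi_{r-1}\), and it uses that \(\nrm{u^j_{\min}}\) is bounded by \(b_u\). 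Applying Corollary \ref{eq:Weyl_corollary} exactly as in Theorem \ref{thm:ZE-MCBF} then gives, for all \(t\in[t_k,t_{k+1})\),
\[
\lambda_1\pth{\Psi_r(\vec{x}(t),\vec{u}^\mathcal{N}_k)} \geq \delta_r - \delta_r = 0 .
\]
In other words, \(\dot{\Psi}_{r-1}(\vec{x}(t)) + c_{\alpha_r}\Psi_{r-1}(\vec{x}(t)) \ggeq 0\) along the closed-loop trajectory on the interval.

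\textbf{Step 2: the backward cascade.} On each interval I apply Proposition \ref{prop:mcbf} repeatedly, or more directly a comparison argument along the trajectory. The key fact is this: if \(\dot{M}(t) + c M(t) \ggeq 0\) and \(M(t_k) \ggeq 0\), then \(M(t) \ggeq 0\). To see it, note that \(\frac{d}{dt}(e^{c(t-t_k)}M(t)) \ggeq 0\), so integrating preserves the positive semidefinite cone.
\begin{itemize}
\item Taking \(M=\Psi_{r-1}\) and using \(\vec{x}_k\in S_r\), we get \(\Psi_{r-1}(\vec{x}(t))\ggeq 0\) on the interval.
\item Since \(\Psi_{r-1} = \dot\Psi_{r-2} + c_{\alpha_{r-1}}\Psi_{r-2}\), the same fact with \(M=\Psi_{r-2}\) and \(\vec{x}_k\in S_{r-1}\) gives \(\Psi_{r-2}(\vec{x}(t))\ggeq 0\).
\item Inducting down to \(q=1\) shows \(\vec{x}(t)\in S_Q\) for all \(t\in[t_k,t_{k+1}]\). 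By continuity this includes \(t_{k+1}\), so \(\vec{x}_{k+1}\in S_Q\).
\end{itemize}
Induction on \(k\), starting from \(\vec{x}(t_0)\in S_Q\), then gives forward invariance for all \(t\geq t_0\). The trajectory-based comparison avoids the neighborhood hypothesis of Proposition \ref{prop:mcbf}, which is convenient here.

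\textbf{Main obstacle: staying in \(D\).} The Lipschitz and norm bounds, and hence the estimate \(E_r^\mathcal{N}\leq\delta_r\), are valid only while the trajectory stays in \(D\). I would handle this with a first-exit argument. Let \(\tau\) be the supremum of times in \([t_k,t_{k+1})\) up to which \(\vec{x}\) stays in \(D\). Step 2 shows the trajectory remains in \(S_Q \subseteq S \subset D\) up to \(\tau\). Because \(D\) is open and the solution is continuous, the trajectory cannot leave \(D\) at \(\tau\), which forces \(\tau = t_{k+1}\).

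A second technical point is regularity. The derivative \(\dot\Psi_{r-1}\) along the trajectory must exist and equal \(\Psi_r - c_{\alpha_r}\Psi_{r-1}\) almost everywhere under ZOH inputs. This follows from the smoothness of \(H\) and Carathéodory solutions, since the control input is constant on each interval.
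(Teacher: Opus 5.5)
Your proof is correct and has the same two-step skeleton as the paper's. First, the sampled-data error bound together with Corollary~\ref{eq:Weyl_corollary} gives \(\Psi_r(\vec{x}(t),\vec{u}^{\mathcal{N}}_k)\ggeq 0\) on each sampling interval. Second, a backward cascade carries this from \(S_r\) down to \(S_1\).

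The difference is how each cascade step is justified. The paper invokes Proposition~\ref{prop:mcbf}, but that result requires the barrier inequality on an open neighborhood of the set. Here \(\dot{\Psi}_{q-1}+c_{\alpha_q}\Psi_{q-1}\ggeq 0\) is known only along the one trajectory on \([t_k,t_{k+1})\); Step~1 controls only states within \(\Delta_t(b_F+b_G b_u)\) of \(\vec{x}_k\). Your integrating-factor bound, \(e^{c_{\alpha_q}(t-t_k)}\Psi_{q-1}(\vec{x}(t))\ggeq\Psi_{q-1}(\vec{x}_k)\ggeq 0\), uses exactly what is available: the inequality along the trajectory and \(\vec{x}_k\in S_q\). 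So it makes the step rigorous rather than citing a result whose hypotheses are not literally met.

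You also fill in two points the paper passes over. Your first-exit argument keeps the trajectory in \(D\), which is where the Lipschitz and norm bounds behind \(\delta_r\) hold. Closedness of \(S_Q\) then gives \(\vec{x}_{k+1}\in S_Q\), which the induction on \(k\) needs. The paper's version gains only brevity.

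One caveat comes from the paper, not from you. Repeating the proof of Lemma~\ref{lemma:adversarial_E} for \(\Psi_r\) puts the Lipschitz constant of \(\Psi_{r-1}\), not \(L_H\), in the \(c_{\alpha_r}\) term. So for \(r\geq 2\), \(\delta_r\) as written is justified only after that replacement.
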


\begin{proof}
    Consider any interval \([t_k, t_{k+1})\) for \(k \in \Nat\).
    Suppose \(\vec{x}_k \in S_Q\). Choose any feedback control law satisfying \(\vec{\kappa}^\mathcal{N}(\vec{x}_k) \in K_r^\mathcal{N}(\vec{x}_k)\). It follows from \eqref{eq:K_r_set} that \(\Psi_r(\vec{x}_k, \vec{\kappa}(\vec{x}_k)) \ggeq \delta_r(\Delta_t, c_{\alpha_r}) I\). Using the definition of \(E^\mathcal{N}\) and similar arguments as in the proofs of Theorems \ref{thm:ZE-MCBF} and \ref{thm:ARZE-MCBF}, it can be shown that \(\Psi_r(\vec{x}(t), \vec{\kappa}(\vec{x}_k)) \ggeq 0\) for all \(t \in [t_k, t_{k+1})\).
    From \eqref{eq:Psi_high_degree}, this implies that \(\dot{\Psi}_{r-1}(\vec{x}(t)) + c_{\alpha_{r-1}} \Psi_{r-1}(\vec{x}(t)) \ggeq 0\) for all \(t \in [t_k, t_{k+1})\). By Proposition \ref{prop:mcbf} it follows that \(\Psi_{q-1}(\vec{x}(t)) \ggeq 0\) and \(\vec{x}(t) \in S_r\) on this time interval. This argument can be continued inductively for sets \(S_{r-2}, \ldots, S_1\). Observe that \(\vec{x}(t) \in S_1\) implies that \(\Psi_0(\vec{x}) = H(\vec{x}) \ggeq 0\) on the interval \([t_k, t_{k+1})\). By these arguments, we therefore have \(\vec{x}(t) \in S_Q\) for all \(t \in [t_k, t_{k+1})\). Since these arguments hold for arbitrary \(k \in \Nat\), the set \(S_Q\) is therefore forward invariant for all \(t \geq t_0\).
\end{proof}

Feasible control inputs within \(K^{\mathcal{N}}_r(\vec{x})\) can be computed using convex optimization methods similar to \eqref{eq:kappa_convex_program}.

\section{Conclusion}
\label{sec:conclusion}

This paper introduced methods to provide adversarial robustness guarantees to multi-agent Matrix Control Barrier Functions for sampled-data systems.
The paper derived novel theoretical results for set invariance under a zero-order control law and cooperative behavior among agents. We presented theoretical results for set invariance in the presence of adversarial inputs. Finally, the paper extended the results to systems having high relative degree.
Future work will explore integration of this theory in various multi-agent robotic settings to provide rigorous online safety guarantees in adversarial environments.

\section*{Acknowledgements}

Generative AI tools \cite{ChatGPT, Claude} were used to conduct mathematical brainstorming and identify relevant mathematical concepts. The paper itself was written solely by the author.

\bibliographystyle{IEEEtran}
\bibliography{bibliography}

\end{document}